\newtheorem{theorem}{Theorem}
\newtheorem{lemma}{Lemma}
\theoremstyle{remark}
\def\N{\mathbb{N}}
\def\Z{\mathbb{Z}}
\def\R{\mathbb{R}}
\def\C{\mathbb{C}}
\def\SS{\mathcal{S}}
\renewcommand{\phi}{\varphi}
\renewcommand{\epsilon}{\varepsilon}
\definecolor{mygray}{gray}{0.9}
\definecolor{deeppink}{RGB}{255,20,147}
\long\def\symbolfootnote[#1]#2{\begingroup
\def\thefootnote{\fnsymbol{footnote}}\footnote[#1]{#2}\endgroup}
\newcommand{\keywords}[1]{ \noindent {\footnotesize
             {\small \em Keywords and phrases.} {\sc #1} } }
\newcommand{\ams}[2]{  \noindent {\footnotesize
             {\small \em AMS {\rm 2000} subject classifications.
             {\rm Primary {\sc #1}; secondary {\sc #2}} } } }
\begin{document}

\title{\bf A multilinear algebra proof of the Cauchy-Binet formula
and a multilinear version of Parseval's identity} 
\author{\sc Takis Konstantopoulos}
\date{\small \em 1 May 2013}
\maketitle

\begin{abstract}
We give a short proof of the Cauchy-Binet determinantal formula using 
multilinear algebra by first generalizing it to an identity {\em not}
involving determinants.
By extending the formula to abstract Hilbert spaces we obtain,
as a corollary, a generalization of the classical Parseval identity.

\vspace*{2mm}
\keywords{Cauchy-Binet theorem, determinant, matrix identities, Hilbert space, 
Parseval's identity,
multilinear algebra, exterior products, projections, Pythagorean theorem,
Fock space}

\vspace*{2mm}
\ams{15A15,15A69}{15A24,46C05}

\end{abstract}

\section{Introduction and overview}
\label{intro}
The classical Cauchy-Binet formula states that
if $A, B$ are two matrices over $\R$ (or any field) of sizes
$n \times N$, $N \times n$, respectively, with $n \le N$, then
\begin{equation}
\label{cbclassic}
\det(AB) 
= \sum_{\sigma} \det(A_\sigma) \det(B^\sigma)
\end{equation}
where the sum is taken over all $\sigma=(\sigma_1 < \sigma_2 < \cdots <
\sigma_n)$, with $\sigma_i \in \{1, \ldots, N\}$, and where $A_\sigma$
(respectively $B^\sigma$) is
the $n \times n$ submatrix of $A$ (respectively submatrix of $B$)
obtained by deleting all columns (respectively all rows)
except those with indices in $\sigma$.

There are many proofs of this formula, each telling its own story,
explaining the formula from a different point of view. The most
direct way of proving the formula is by writing down the 
determinant as a sum over permutations and performing algebraic
manipulations. This is the approach taken in many linear algebra books;
see, e.g., Marcus and Minc \cite[Theorem 6.1, p.\ 128]{MM}
and Gohberg {\em et al.} \cite[Theorem A.2.1, p.\ 651]{GLR}.
A probabilistic interpretation and proof of the formula (which starts 
by using the formula for a determinant) is also available \cite{EHR,ER}.
On the other hand, there are many combinatorial proofs. Suffice, perhaps,
to refer to the one chosen to be included in the ``Proofs from The Book''
\cite{PFTB} by Aigner and Ziegler. 
This is a nice proof (after all, it is a proof from The Book)
based on the beautiful Gessel-Vienot lemma 
which states that, in a finite
weighted acyclic directed graph,
the determinant of the path matrix between
two sets of vertices of cardinality $n$ each
equals a sum over all possible vertex-disjoint path systems;
see \cite[Chap.\ 29, p.\ 196]{PFTB} and \cite{AIG} for details.
Another very simple proof appears in the recent book by Terence Tao
\cite[p.\ 298]{TAO}) on random matrices. This proof is based on 
a relation between the characteristic polynomials of $AB$ and $BA$.

On the other hand, it is well-known that the Cauchy-Binet formula
is a generalization of the Pythagorean theorem. Indeed, let $A$ be 
a $n \times N$ real matrix, $n \le N$, and take $B=A^T$,
the transpose of $A$. Since $B^\sigma = (A^T)^\sigma = (A_\sigma)^T$,
the formula gives
\[
\det(AA^T) = \sum_\sigma \det(A_\sigma)^2,
\]
which can be interpreted geometrically as follows: The parallelotope in $\R^N$
generated by the $n$ row vectors of $A$ has $n$-dimensional
Lebesgue measure $\sqrt{\det(AA^T)}$.
Therefore the formula says that the square of the $n$-dimensional measure of 
an $n$-dimensional parallelotope, embedded in
a higher-dimensional Euclidean space, equals the sum of the
squares of the measures of its projections onto all possible $n$-dimensional
coordinate hyperplanes. If $n=1$ this reduces to the Pythagorean theorem.

The goal of this short article is to give a proof of the Cauchy-Binet
formula which is as simple as possible, from an algebraic-geometric 
viewpoint. If $n=1$, the Cauchy-Binet formula is a triviality: it states
that the inner product of two $N$-dimensional vectors equals
the sum of the products of their components:
\[
(a_1\ldots,a_N)\cdot (b_1, \ldots, b_N)^T = \sum_{\sigma=1}^N a_\sigma b_\sigma.
\]
There is no need to take determinants here, because both sides
involve $1\times 1$ matrices, i.e., real numbers.
What we show is that the general case, when $n \ge 1$, is the same,
but on bigger vector spaces.
In Section \ref{ingred} we give an account of the ingredients we need,
and, in Section \ref{centersec}, we state and prove the main formula (Theorem 
\ref{abstractcb}) without determinants and in a more general setup;
a corollary of it is the classical Cauchy-Binet formula.
Then, in Section \ref{HS}, we see that the formula can be extended to
a Hilbert space, giving a generalization of the classical 
Parseval identity. We conclude with a few bibliographic remarks.

\section{The main ingredients}
\label{ingred}
The main theorem, Theorem \ref{abstractcb} below, is requires
two ingredients.

(i) The first is the notion of the determinant of a linear transformation
$F: X \to X$ on a vector space $X$ of dimension $d$.
The dimension of the linear space $\bigwedge^m X$ of alternating $m$-linear
maps $\omega: X^m \to \R$
is $\binom{d}{m}$. 
For each $m$, 
the $m$-th level dual $F^* : \bigwedge^m X \to \bigwedge^m X$ of $F$ 
is defined by
\begin{equation}
\label{Fstar}
F^*\omega[x_1, \ldots, x_m] := \omega[Fx_1, \ldots, Fx_m].
\end{equation}
See, e.g., \cite{SPI}.
(Duals obey the standard composition rules: $(GF)^*=F^* G^*$.)
Since $\bigwedge^d X$ is $1$-dimensional, the $d$-th level dual
$F^*$ is multiplication by a constant.
This constant is, {\em by definition}, the determinant of $F$:
\begin{equation}
\label{detdef}
F^* \omega= (\det F) \cdot \omega, \quad \omega \in \textstyle \bigwedge^d X.
\end{equation}

(ii) The second ingredient is very simple too. Let $X, Y, Z$ be vector spaces,
and $F: X \to Y$, $G: Y \to Z$ linear maps. Suppose $Y$ is
the direct sum of $Y_1, \ldots, Y_K$.
Let $P_i: Y \to Y_i$, $1 \le i \le K$, be the projections corresponding
to this direct sum (so $\text{id}_V = P_1+\cdots+P_K$ is a partition
of the identity on $V$), and let $E_i : Y_i \to Y$ be the natural
embedding of $Y_i$ into $Y$. Then, clearly,
\begin{equation}
\label{split}
GF = \sum_{i=1}^K (GE_i) (P_i F).
\end{equation}
See Diagram 1.

\section{An abstract version of the Cauchy-Binet formula}
\label{centersec}
Let $U, V, W$ be finite-dimensional vector spaces of arbitrary
dimensions, and let 
$B : U \to V$, $A: V \to W$ be two linear maps.
Fix $n \in \N$ and consider the $n$-th level duals
$B^*: \bigwedge^n V \to \bigwedge^n U$, $A^*: \bigwedge^n W \to \bigwedge^n V$.
Let $N$ be the dimension of $V$ and let $f_1,\ldots,f_N$ be a basis for $V$.
See Diagram 2.
Denote by $\SS_n(N)$ the set of subsets of $\{1,\ldots,N\}$ of size $n$.
For each $\sigma \in \SS_n(N)$, let
$V_\sigma$ be the subspace of $V$ spanned by $\{f_i, i \in \sigma\}$
and consider the direct sum
\begin{equation}
\label{splitV}
V = V_\sigma \oplus V_{\overline \sigma},
\end{equation}
where $\overline \sigma :=\{1,\ldots,N\} \setminus \sigma$, letting
\[
P_\sigma: V \to V_\sigma
\]
be the projection of $V$ onto $V_\sigma$ along $V_{\overline \sigma}$,
and 
\[
E_\sigma: V_\sigma \to V
\]
the natural embedding of $V_\sigma$ into $V$.

\begin{theorem}
\label{abstractcb}
\begin{equation}
\label{cb}
(AB)^* = \sum_{\sigma \in \SS_n(N)} (P_\sigma B)^* (A E_\sigma)^*,
\end{equation}
\end{theorem}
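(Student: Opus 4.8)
The plan is to peel off the outer composition and reduce everything to a single splitting of the intermediate space $\bigwedge^n V$. By the composition rule $(GF)^*=F^*G^*$ noted after \eqref{Fstar}, the left-hand side is $(AB)^* = B^*A^*$, the composite of $A^*\colon \bigwedge^n W \to \bigwedge^n V$ followed by $B^*\colon \bigwedge^n V \to \bigwedge^n U$, while the two factors on the right are $(P_\sigma B)^* = B^*P_\sigma^*$ and $(A E_\sigma)^* = E_\sigma^* A^*$. Thus \eqref{cb} is equivalent to
\[
B^*A^* = \sum_{\sigma\in\SS_n(N)} B^* P_\sigma^* E_\sigma^* A^*,
\]
and, pulling $B^*$ and $A^*$ outside the sum, it suffices to prove the partition of identity $\sum_{\sigma} P_\sigma^* E_\sigma^* = \mathrm{id}$ on $\bigwedge^n V$. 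This is exactly the shape of the splitting identity \eqref{split}, so the whole content lies in producing the correct direct-sum decomposition of $\bigwedge^n V$.

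For the decomposition I would use $\bigwedge^n V = \bigoplus_{\sigma\in\SS_n(N)} \bigwedge^n V_\sigma$. Since $\dim V_\sigma = n$, each summand $\bigwedge^n V_\sigma$ is one-dimensional, and there are exactly $\binom{N}{n} = |\SS_n(N)| = \dim\bigwedge^n V$ of them. I would take the projection onto the $\sigma$-summand to be $E_\sigma^*\colon \bigwedge^n V \to \bigwedge^n V_\sigma$ and the embedding to be $P_\sigma^*\colon \bigwedge^n V_\sigma \to \bigwedge^n V$. The one computation that matters is the orthogonality relation $E_\tau^* P_\sigma^* = (P_\sigma E_\tau)^*$: here $P_\sigma E_\tau\colon V_\tau \to V_\sigma$ maps between $n$-dimensional spaces, so by the very definition \eqref{detdef} of the determinant its top-level dual is multiplication by $\det(P_\sigma E_\tau)$. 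When $\sigma=\tau$ this is $\det(\mathrm{id}_{V_\sigma})=1$; when $\sigma\ne\tau$ some $f_i$ with $i\in\tau\setminus\sigma$ is annihilated by $P_\sigma$, so $P_\sigma E_\tau$ is singular and the dual vanishes. Hence $E_\tau^* P_\sigma^* = \delta_{\sigma\tau}\,\mathrm{id}$.

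It then follows that the operators $\Pi_\sigma := P_\sigma^* E_\sigma^*$ obey $\Pi_\sigma\Pi_\tau = P_\sigma^*(E_\sigma^* P_\tau^*)E_\tau^* = \delta_{\sigma\tau}\Pi_\sigma$, so they are orthogonal idempotents, each of rank one (since $E_\sigma^* P_\sigma^* = \mathrm{id}$ forces $P_\sigma^*$ to be injective and $E_\sigma^*$ surjective onto the one-dimensional $\bigwedge^n V_\sigma$). Their ranks sum to $\binom{N}{n} = \dim\bigwedge^n V$, and orthogonal idempotents whose ranks exhaust the dimension must sum to the identity; this gives the partition $\sum_\sigma \Pi_\sigma = \mathrm{id}$ demanded by \eqref{split}. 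Substituting back, $(P_\sigma B)^*(A E_\sigma)^* = B^*\Pi_\sigma A^*$, and summing yields $B^*\bigl(\sum_\sigma \Pi_\sigma\bigr)A^* = B^*A^* = (AB)^*$, which is \eqref{cb}.

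The routine parts are the two invocations of the composition rule and the dimension count. The step carrying the real content, and the one to handle with care, is the identification of $E_\sigma^*$ and $P_\sigma^*$ as the projection and embedding of a genuine direct-sum decomposition of $\bigwedge^n V$: one must respect that $(\cdot)^*$ reverses arrows, and it is the orthogonality $E_\tau^*P_\sigma^*=\delta_{\sigma\tau}\,\mathrm{id}$ that explains why the coordinate subspaces $V_\sigma$, which certainly do not split $V$, nevertheless split its top exterior power $\bigwedge^n V$.
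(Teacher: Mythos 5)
Your proof is correct, and it shares the paper's overall strategy: pass to duals via the composition rule, and split the composite $B^*A^*\colon \bigwedge^n W \to \bigwedge^n V \to \bigwedge^n U$ across the subsets $\sigma$. Where you genuinely differ is in how the central fact --- the partition of identity $\sum_{\sigma} P_\sigma^* E_\sigma^* = \mathrm{id}$ on $\bigwedge^n V$ --- gets established. The paper starts from the direct-sum decomposition \eqref{splitwedge}, taken as given together with its projections $\mathscr P_\sigma$ and embeddings $\mathscr E_\sigma$, applies \eqref{split} verbatim, and then proves Lemma \ref{lll}, identifying $\mathscr P_\sigma = E_\sigma^*$ and $\mathscr E_\sigma = P_\sigma^*$ by evaluating forms on basis vectors and expanding by multilinearity. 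You never invoke a pre-existing decomposition: you prove the orthogonality relation $E_\tau^* P_\sigma^* = (P_\sigma E_\tau)^* = \delta_{\sigma\tau}\,\mathrm{id}$ and then deduce the partition of identity abstractly, from the fact that the $\Pi_\sigma = P_\sigma^* E_\sigma^*$ are pairwise-orthogonal rank-one idempotents whose number equals $\dim \bigwedge^n V = \binom{N}{n}$. Your route is more operator-theoretic and arguably slicker (the one-line computation of $(P_\sigma E_\tau)^*$ replaces the paper's basis-vector manipulations in Lemma \ref{lll}), while the paper's route keeps the geometry visible --- the splitting of $\bigwedge^n V$ literally mirrors the splitting \eqref{splitV} of $V$ --- and requires no rank counting. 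Two small repairs to yours: first, for $\sigma \ne \tau$ the map $P_\sigma E_\tau \colon V_\tau \to V_\sigma$ is not an endomorphism, so the paper's definition \eqref{detdef} attaches no meaning to $\det(P_\sigma E_\tau)$; but you do not need it, since singularity of $P_\sigma E_\tau$ alone forces $(P_\sigma E_\tau)^* = 0$ (an alternating $n$-form vanishes on linearly dependent arguments). Second, the step ``orthogonal idempotents whose ranks exhaust the dimension must sum to the identity'' deserves its one-line proof: $S := \sum_\sigma \Pi_\sigma$ satisfies $S^2 = S$ and fixes each $\image \Pi_\sigma$ pointwise; these images are linearly independent (apply $\Pi_\tau$ to a vanishing sum) and their dimensions add up to $\dim \bigwedge^n V$, so the fixed space of the idempotent $S$ is everything, i.e.\ $S = \mathrm{id}$.
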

\begin{proof}
The $\binom{N}{n}$--dimensional space $\bigwedge^n V$
is the direct sum of the $1$-dimensional spaces $\bigwedge^n V_\sigma$,
where $\sigma$ ranges in $\SS_n(N)$:
\begin{equation}
\label{splitwedge}
\textstyle \bigwedge^n V = \bigoplus_{\sigma \in \SS_n(N)} \bigwedge^n V_\sigma.
\end{equation}
Let $\mathscr P_\sigma: \bigwedge^n V \to \bigwedge^n V_\sigma$
be projections corresponding to this direct sum,
and let  $\mathscr E_\sigma : \bigwedge^n V_\sigma
\to \bigwedge^n V$ be natural embedding.
Using \eqref{split} (with $A^*$, $B^*$ in place of $F$, $G$, respectively,
and $K=\binom{N}{n}$), we have
\[
B^* A^* 
 = \sum_{\sigma \in \SS_n(N)} (B^* \mathscr E_\sigma) (\mathscr P_\sigma A^*).
\]
See Diagram 2.
Since (see Lemma \ref{lll} below) 
\[
\mathscr E_\sigma = P_\sigma^*, \quad 
\mathscr P_\sigma = E_\sigma^*,
\]
the theorem follows from the 
composition rules of the duals.
\end{proof}

\begin{lemma}
\label{lll}
\begin{equation*}
\mathscr P_\sigma = E_\sigma^*,
\quad
\mathscr E_\sigma = P_\sigma^*.
\end{equation*}
\end{lemma}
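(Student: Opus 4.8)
The plan is to establish the two identities together with the decomposition \eqref{splitwedge} itself, by checking that the pair of dual maps $P_\sigma^*:\bigwedge^n V_\sigma\to\bigwedge^n V$ and $E_\sigma^*:\bigwedge^n V\to\bigwedge^n V_\sigma$ satisfies the two relations that characterize the embeddings and projections of a direct sum, namely $E_\tau^* P_\sigma^*=\delta_{\sigma\tau}\,\mathrm{id}$ and $\sum_\sigma P_\sigma^* E_\sigma^*=\mathrm{id}$. Once these hold, the elementary fact that such a system makes $\bigwedge^n V$ the internal direct sum of the lines $\mathrm{Im}(P_\sigma^*)$, with structure maps $P_\sigma^*$ and $E_\sigma^*$, yields simultaneously the splitting \eqref{splitwedge} and the identifications $\mathscr E_\sigma=P_\sigma^*$, $\mathscr P_\sigma=E_\sigma^*$.

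For the first relation I would use the composition rule $(GF)^*=F^*G^*$ to write $E_\tau^* P_\sigma^*=(P_\sigma E_\tau)^*$ and then analyze the map $P_\sigma E_\tau:V_\tau\to V_\sigma$. Since $P_\sigma f_i=f_i$ for $i\in\sigma$ and $P_\sigma f_i=0$ for $i\notin\sigma$, its image is $V_{\sigma\cap\tau}$, of dimension $|\sigma\cap\tau|$. When $\sigma=\tau$ this map is $\mathrm{id}_{V_\sigma}$, so its dual is the identity; when $\sigma\neq\tau$ the rank is $|\sigma\cap\tau|\le n-1<n$, so the $n$ image vectors $P_\sigma E_\tau v_1,\dots,P_\sigma E_\tau v_n$ are always linearly dependent and every alternating $n$-form vanishes on them, giving $(P_\sigma E_\tau)^*=0$. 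This rank argument — an $n$-th level dual annihilates forms whenever the underlying map has rank below $n$ — is the one genuinely geometric input.

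The second relation is where care is needed, and I expect it to be the main obstacle: the operation $F\mapsto F^*$ is not additive once $n\ge 2$, so although $\sum_\sigma E_\sigma P_\sigma\neq\mathrm{id}_V$ (the rank-$n$ projections onto the overlapping subspaces $V_\sigma$ cannot sum to the identity on $V$), one cannot pass the sum through the dual. Instead I would verify $\sum_\sigma P_\sigma^* E_\sigma^*=\mathrm{id}$ directly on the basis $\{g_\tau\}$ of $\bigwedge^n V$ dual to $\{f_i\}$, where $g_\tau[v_1,\dots,v_n]$ is the $\tau$-minor $\det((v_j)_{\tau_i})_{i,j}$. Restricting $g_\tau$ to $V_\sigma$ kills all coordinates outside $\sigma$, so $E_\sigma^* g_\tau=0$ unless $\tau\subseteq\sigma$, i.e. unless $\sigma=\tau$, while $E_\tau^* g_\tau$ is the generator of the line $\bigwedge^n V_\tau$; pulling this back by $P_\tau$ returns $P_\tau^* E_\tau^* g_\tau=g_\tau$. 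Hence the sum collapses to a single term and equals $g_\tau$ for every $\tau$, proving the relation and incidentally showing $\mathrm{Im}(P_\sigma^*)=\R\,g_\sigma$.

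Finally I would record the characterization used above: if $\iota_\sigma:S_\sigma\to W$ and $\pi_\sigma:W\to S_\sigma$ satisfy $\pi_\tau\iota_\sigma=\delta_{\sigma\tau}\,\mathrm{id}$ and $\sum_\sigma\iota_\sigma\pi_\sigma=\mathrm{id}_W$, then $W=\bigoplus_\sigma\mathrm{Im}(\iota_\sigma)$ with $\iota_\sigma,\pi_\sigma$ the embeddings and projections — the first relation forces injectivity and directness, the second forces spanning. Applying this with $\iota_\sigma=P_\sigma^*$, $\pi_\sigma=E_\sigma^*$, $S_\sigma=\bigwedge^n V_\sigma$ and $\mathrm{Im}(P_\sigma^*)=\R\,g_\sigma=\bigwedge^n V_\sigma$ identifies the paper's $\mathscr E_\sigma,\mathscr P_\sigma$ with $P_\sigma^*,E_\sigma^*$ and completes the argument.
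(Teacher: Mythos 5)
Your proof is correct, but it takes a genuinely different route from the paper's. The paper takes the splitting \eqref{splitwedge} as given, and verifies the two identities pointwise: it evaluates $\mathscr P_\sigma\omega$ on the basis vectors $f_{\sigma_{\pi(1)}},\dots,f_{\sigma_{\pi(n)}}$ of $V_\sigma$ (reducing via the partition of the identity $\omega=\sum_\tau \mathscr P_\tau\omega$ to the cases $\tau=\sigma$ and $\tau\neq\sigma$), and for the second identity expands $v_i=P_\sigma v_i+P_{\overline\sigma}v_i$ and uses multilinearity to kill $2^n-1$ of the $2^n$ terms. You instead \emph{construct} the splitting and the identifications simultaneously: you verify the biorthogonality relation $E_\tau^*P_\sigma^*=(P_\sigma E_\tau)^*=\delta_{\sigma\tau}\,\mathrm{id}$ by a rank argument (an $n$-th level dual kills any map of rank below $n$), check completeness $\sum_\sigma P_\sigma^*E_\sigma^*=\mathrm{id}$ on the dual basis $\{g_\tau\}$ of minors, and then invoke the standard characterization of internal direct sums by a complete system of orthogonal idempotents $Q_\sigma=P_\sigma^*E_\sigma^*$. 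What your approach buys is worth noting: the paper's statement of \eqref{splitwedge} is slightly informal, since $\bigwedge^n V_\sigma$ (forms \emph{on} $V_\sigma$) is not literally a subspace of $\bigwedge^n V$ until one specifies the embedding, and the paper's proof tacitly uses that identification (e.g.\ in the step ``$\mathscr E_\sigma\omega=\omega$''); your argument resolves this by exhibiting the embedding as $P_\sigma^*$ and its image as $\R\,g_\sigma$, so the decomposition comes out as a conclusion rather than a hypothesis. You also correctly flag why the completeness relation cannot be obtained by dualizing a relation on $V$ (duality at level $n\ge 2$ is not additive, and indeed $\sum_\sigma E_\sigma P_\sigma\neq\mathrm{id}_V$), which is exactly the trap a naive argument would fall into. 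The costs are mild: you need the dual-basis description of $\bigwedge^n V$ and the idempotent characterization of direct sums, both standard.
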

\proof
We identify $\SS_n(N)$ with the set of strictly
increasing sequences of length $n$ with values in $\{1,\ldots,N\}$.
Thus, if $\sigma$ is a subset of $\{1,\ldots,N\}$ we 
let $(\sigma_1, \ldots, \sigma_n)$ be a listing of
its elements in increasing order.
To prove the first equality it suffices to show that
\[
\mathscr P_\sigma \omega[v_1, \ldots, v_n]
= \omega[E_\sigma v_1, \ldots, E_\sigma v_n],
\]
for all $\omega \in \Lambda_n(V)$ and all $v_1, \ldots, v_n \in V_\sigma$.
But then $E_\sigma v_i = v_i$ and, since $V_\sigma$ is spanned
by $f_{\sigma_1}, \ldots, f_{\sigma_n}$, it suffices to
show that
\[
\mathscr P_\sigma \omega[f_{\sigma_{\pi(1)}}, \ldots, f_{\sigma_{\pi(n)}}]
= \omega[f_{\sigma_{\pi(1)}}, \ldots, f_{\sigma_{\pi(n)}}],
\]
where $\pi$ is a permutation of $\{1,\ldots, n\}$.
Since $\omega=\sum_{\tau \in \SS_n(N)} \mathscr P_\tau \omega$
[this is the partition of the identity on $\bigwedge^n V$ 
corresponding to \eqref{splitwedge}]
we may replace $\omega$ by $\mathscr P_\tau \omega$ in the last
display:
\[
\mathscr P_\sigma \mathscr P_\tau
\omega[f_{\sigma_{\pi(1)}}, \ldots, f_{\sigma_{\pi(n)}}]
= \mathscr P_\tau
\omega[f_{\sigma_{\pi(1)}}, \ldots, f_{\sigma_{\pi(n)}}].
\]
But then, if $\tau=\sigma$ the two sides are obviously equal,
and if $\tau \neq \sigma$ the left-hand side equals zero
and $\mathscr P_\tau \omega[f_{\sigma_{\pi(1)}}, \ldots, f_{\sigma_{\pi(n)}}]
=0$.

To prove the second equality it suffices to show that
\[
\mathscr E_\sigma \omega[v_1, \ldots, v_n]
= \omega[P_\sigma v_1, \ldots, P_\sigma v_n],
\]
for all $\omega \in \Lambda_n(V_\sigma)$
and all $v_1, \ldots, v_n \in V$.
But then $\mathscr E_\sigma \omega=\omega$. Since
$v_i=P_\sigma v_i + P_{\overline \sigma} v_i$
[corresponding to \eqref{splitV}], 
we have
\[
\mathscr E_\sigma \omega[v_1, \ldots, v_n]
= \omega [ P_\sigma v_1 + P_{\overline \sigma} v_1,\ldots,
P_\sigma v_n + P_{\overline \sigma} v_n].
\]
Using the multilinearity of $\omega$ we split the latter
into $2^n$ terms, all of which are zero except the one involving
only $P_\sigma v_i$ as arguments.
\qed

Consider now the case where $W=U$. Moreover, take the number $n$ in
Theorem \ref{abstractcb} to be equal to their common dimension.
Assume $n \le N=\dim V$ to avoid trivialities.
Then the linear maps $(AB)^*$, $(P_\sigma B)^*$, and $(A E_\sigma)^*$, appearing
in formula \eqref{cb}, are
maps between $1$-dimensional spaces. 
Since the spaces $V_\sigma$ and $U$ have common dimension $n$, we can 
identify them by means of a linear bijection
\[
\phi_\sigma: V_\sigma \to U.
\]
Then
\[
(P_\sigma B)^* (A E_\sigma)^* =
(P_\sigma B)^* \phi_\sigma^* (\phi_\sigma^{-1})^* (A E_\sigma)^*
= (\phi_\sigma P_\sigma B)^* (A E_\sigma \phi_\sigma^{-1})^*,
\]
and so
\begin{equation}
\label{cbU}
(AB)^* = \sum_{\sigma \in \SS_n(N)} (\phi_\sigma P_\sigma B)^* (A E_\sigma \phi_\sigma^{-1})^*.
\end{equation}
Since all three linear maps $AB$, $\phi_\sigma P_\sigma B$, 
$A E_\sigma \phi_\sigma^{-1}$ are linear maps on the same $1$-dimensional
vector space $U$, it follows, from the definition of the determinant,
that
\begin{equation}
\label{cbUdet}
\det(AB) = \sum_{\sigma \in \SS_n(N)} \det(\phi_\sigma P_\sigma B)
\det(A E_\sigma \phi_\sigma^{-1}).
\end{equation}
(The role of $\phi_\sigma$ is to force all maps
be on the same space, so we can talk about determinants.)
In the case where $U=\R^n$, $V=\R^N$, this proves the classical Cauchy-Binet
formula \eqref{cbclassic}.
If $N=n$, then we have shown that the determinant of the product
is the product of the determinants.

Therefore \eqref{cbclassic} follows from \eqref{cbUdet}. The latter is
a restatement of \eqref{cbU}. But \eqref{cbU} is a special case of 
\eqref{cb} because in \eqref{cb} we allow $U, V, W$ to be
different with dimensions that may be distinct from $n$.

\section{Multilinear Parseval's identity}
\label{HS}
We are now going to replace the middle space $V$ of the previous setup
by a separable Hilbert space $H$ over the complex numbers $\C$, having
inner product $\langle x, y \rangle$.
Let $f_1, f_2, \ldots$ be an orthonormal basis for $H$. 
Let $\bigwedge^n H$ be the collection of all {\em continuous}
alternating multilinear functionals $\omega: H^n \to \C$.
In particular, $\bigwedge^1 H=H^*$ is the Hilbert space dual of $H$.
By the Riesz-Fischer theorem, $f_1, f_2, \ldots$ forms a basis for $\bigwedge^1 H$
in the sense that every $\omega \in \bigwedge^1 H$
can be uniquely written as $\omega[x] = \sum_{\sigma=1}^\infty
a_\sigma \langle f_\sigma, x \rangle$, for 
$a_\sigma \in \C$ such that $\sum_\sigma |a_\sigma|^2 < \infty$.
More generally, $\bigwedge^n H$ is a separable Hilbert space 
with orthonormal (with respect to a suitably defined inner product) basis
\[
f_{\sigma_1} \wedge \cdots \wedge f_{\sigma_n},
\quad \sigma=(\sigma_1, \ldots, \sigma_n) \in \SS_n(\N),
\]
where $\SS_n(\N)$ is the collection of all $n$-tuples $(\sigma_1, \ldots,
\sigma_n)$ of positive integers such that $\sigma_1 < \cdots < \sigma_n$.
Recall that the wedge product satisfies, by definition,
\[
(f_1 \wedge f_2) [x,y] = f_1[x] f_2[y] - f_1[y] f_2[x],
\]
and, more generally, $f_{\sigma_1} \wedge \cdots \wedge f_{\sigma_n}$
is obtained by antisymmetrization of the tensor product of 
$f_{\sigma_1}, \ldots, f_{\sigma_n}$.
Incidentally, the direct sum of $\bigoplus_{n=0}^\infty \bigwedge^n H$ 
(where
$\bigwedge^0 H := \C$) is the so-called alternating Fock (or fermionic)
space \cite{RS}. 
Wedge products can be defined, by linearity, between any finite
number of elements of this space.

If $H_1, H_2$ are two Hilbert spaces and 
$F: H_1 \to H_2$ is a continuous linear function
then $F^* : \bigwedge^n H_2 \to \bigwedge^n H_1$
is defined as before--see \eqref{Fstar}--and is, moreover, continuous.

\begin{theorem}
\label{HT}
Let $H$ be a separable Hilbert space over $\C$ 
with orthonormal basis $f_1, f_2, \ldots$,
and let $n$ be a positive integer.
For each $\sigma \in \SS_n(\N)$, let $H_\sigma$ be the subspace spanned by
$f_{\sigma_1}, \ldots, f_{\sigma_n}$. Let $E_\sigma : H_\sigma \to H$ be the
natural embedding of $H_\sigma$ into $H$
and $P_\sigma : H \to H_\sigma$ the orthogonal projection
of $H$ onto $H_\sigma$.
If $U$, $W$ are finite-dimensional vector spaces over $\C$ and $B: U \to H$,
$A: H \to W$ continuous linear maps,
then
\[
(AB)^* = \sum_{\sigma \in \SS_n(\N)} (P_\sigma B)^* (A E_\sigma)^*.
\]
If $W=U$ with common dimension $n$, and if $\phi_\sigma : H_\sigma \to U$ is 
any linear bijection, then
\[
(AB)^* = 
\sum_{\sigma \in \SS_n(\N)} (\phi_\sigma P_\sigma B)^* (A E_\sigma\phi_\sigma^{-1})^*.
\]
In particular,
\[
\det(AB) = \sum_{\sigma \in \SS_n(\N)} 
\det (\phi_\sigma P_\sigma B)
\det (A E_\sigma\phi_\sigma^{-1}).
\]
\end{theorem}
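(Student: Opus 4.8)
The plan is to reduce everything to the finite-dimensional Theorem~\ref{abstractcb}, treating the passage to an infinite basis as the only genuinely new, \emph{analytic}, ingredient; algebraically the identity is ``the same formula on a bigger space.'' Write $e_\sigma:=f_{\sigma_1}\wedge\cdots\wedge f_{\sigma_n}$ for $\sigma\in\SS_n(\N)$, so that $\{e_\sigma\}$ is the orthonormal basis of the separable Hilbert space $\bigwedge^n H$ recalled above. I would first observe that Lemma~\ref{lll} carries over verbatim: with $\mathscr P_\sigma$ the \emph{orthogonal} projection of $\bigwedge^n H$ onto the line $\C e_\sigma=\bigwedge^n H_\sigma$ and $\mathscr E_\sigma$ the embedding, the same argument gives $E_\sigma^*=\mathscr P_\sigma$ and $P_\sigma^*=\mathscr E_\sigma$. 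Thus each summand factors as $(A E_\sigma)^*=E_\sigma^*A^*=\mathscr P_\sigma A^*$ and $(P_\sigma B)^*=B^*P_\sigma^*=B^*\mathscr E_\sigma$.

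The first thing to establish is that the series converges \emph{absolutely} (hence unconditionally) in $\mathcal{L}(\bigwedge^n W,\bigwedge^n U)$, so that the right-hand side is well defined independently of how $\SS_n(\N)$ is enumerated. Here I would use the two factorizations above together with Parseval in the Fock space $\bigwedge^n H$. Since $\bigwedge^n W$ and $\bigwedge^n U$ are finite-dimensional, both $A^*:\bigwedge^n W\to\bigwedge^n H$ and $B^*:\bigwedge^n H\to\bigwedge^n U$ have finite rank and are therefore Hilbert--Schmidt. Computing in the orthonormal basis $\{e_\sigma\}$, I expect
\[
\sum_{\sigma}\|(A E_\sigma)^*\|_{\mathrm{HS}}^2=\|A^*\|_{\mathrm{HS}}^2,
\qquad
\sum_{\sigma}\|(P_\sigma B)^*\|_{\mathrm{HS}}^2=\sum_\sigma\|B^*e_\sigma\|^2=\|B^*\|_{\mathrm{HS}}^2,
\]
both finite, the first being exactly Bessel's identity for $A^*\eta$ summed over an orthonormal basis $\{\eta\}$ of $\bigwedge^n W$. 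A single application of Cauchy--Schwarz, together with $\|\cdot\|\le\|\cdot\|_{\mathrm{HS}}$, then gives $\sum_\sigma\|(P_\sigma B)^*(A E_\sigma)^*\|\le\|B^*\|_{\mathrm{HS}}\,\|A^*\|_{\mathrm{HS}}<\infty$, which is absolute convergence.

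To identify the sum I would truncate. Let $Q_m:H\to H$ be the orthogonal projection onto $H_m:=\operatorname{span}(f_1,\ldots,f_m)$ and apply Theorem~\ref{abstractcb} with $V$ replaced by $H_m$ and with $Q_m B:U\to H_m$ and $A$ restricted to $H_m$ in the roles of $B$ and $A$. Because the projections are orthogonal and the subspaces are nested, the internal projection $P_\sigma^{(m)}:H_m\to H_\sigma$ satisfies $P_\sigma^{(m)}Q_m=P_\sigma$, and the inclusion $H_\sigma\hookrightarrow H_m\hookrightarrow H$ is just $E_\sigma$; hence each finite-dimensional summand equals $(P_\sigma B)^*(A E_\sigma)^*$ unchanged. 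Theorem~\ref{abstractcb} then yields the exact partial-sum identity
\[
(AQ_mB)^*=\sum_{\sigma\in\SS_n(m)}(P_\sigma B)^*(A E_\sigma)^*,
\]
where $\SS_n(m)\subset\SS_n(\N)$ consists of the $\sigma$ with $\sigma_n\le m$. Since $U$ is finite-dimensional and $Q_m\to\operatorname{id}$ strongly, $AQ_mB\to AB$ in the operator norm of $\mathcal{L}(U,W)$; and as $C\mapsto C^*$ is continuous (a degree-$n$ polynomial map between the finite-dimensional spaces $\mathcal{L}(U,W)$ and $\mathcal{L}(\bigwedge^nW,\bigwedge^nU)$), the left-hand side tends to $(AB)^*$. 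As $m\to\infty$ the sets $\SS_n(m)$ exhaust $\SS_n(\N)$, so by the absolute convergence already established the right-hand side tends to the full series, proving the first displayed identity.

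The remaining two formulas require no new analysis: when $W=U$ with common dimension $n$, inserting $\phi_\sigma^*(\phi_\sigma^{-1})^*=\operatorname{id}$ and using the composition rule $(GF)^*=F^*G^*$ turns each term into $(\phi_\sigma P_\sigma B)^*(A E_\sigma\phi_\sigma^{-1})^*$, exactly as in the derivation of \eqref{cbU}; and since all maps now act on the $1$-dimensional space $\bigwedge^n U$, reading off via the definition \eqref{detdef} the scalar by which each acts gives the determinantal form. The main obstacle, and the only place the argument goes beyond Theorem~\ref{abstractcb}, is the analytic step: proving summability and justifying the interchange of the limit with the infinite sum. Both are handled by the finite rank of $A^*$ and $B^*$ and by Parseval in $\bigwedge^n H$, which is precisely why the statement deserves to be called a multilinear Parseval identity.
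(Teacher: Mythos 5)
Your proof is correct, and it is substantially more detailed than what the paper offers: the paper's entire ``proof'' of Theorem~\ref{HT} is the assertion that the argument is ``exactly as in the finite-dimensional case,'' with infinite sums understood in the Hilbert-space sense --- i.e., it implicitly re-runs the proof of Theorem~\ref{abstractcb} using the orthogonal decomposition of $\bigwedge^n H$ into the lines $\bigwedge^n H_\sigma$, an infinite-summand version of \eqref{split}, and Lemma~\ref{lll}. You instead reduce to the finite-dimensional theorem by truncation: apply Theorem~\ref{abstractcb} on $H_m=\operatorname{span}(f_1,\ldots,f_m)$, check that each summand $(P_\sigma B)^*(AE_\sigma)^*$ is unchanged (which uses that for orthonormal bases the coordinate projections of Theorem~\ref{abstractcb} coincide with orthogonal projections, and $P^{(m)}_\sigma Q_m=P_\sigma$), and pass to the limit, with the interchange justified by your absolute-convergence estimate $\sum_\sigma\|(P_\sigma B)^*(AE_\sigma)^*\|\le\|B^*\|_{\mathrm{HS}}\|A^*\|_{\mathrm{HS}}$ coming from the finite rank of $A^*$ and $B^*$. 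This route buys something the paper leaves implicit: it makes precise in what sense the series converges (absolutely, hence unconditionally, in the finite-dimensional space $\mathcal{L}(\bigwedge^n W,\bigwedge^n U)$), rather than merely ``in the Hilbert space sense,'' and it quarantines all the analysis into two clean steps (summability and a norm limit), while the algebra is literally quoted from the finite case. Two small points to tighten: Lemma~\ref{lll} does not carry over quite ``verbatim,'' since its proof uses the partition of identity $\omega=\sum_\tau \mathscr P_\tau\omega$, which is now an infinite series, so you need continuity of the evaluation functionals $\omega\mapsto\omega[x_1,\ldots,x_n]$ on $\bigwedge^n H$ to evaluate it termwise (true here, but worth saying); and the norm convergence $AQ_mB\to AB$ deserves the one-line justification that $B$ has finite rank, so strong convergence of $Q_m$ upgrades to uniform convergence on the image of the unit ball of $U$.
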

The proof of this theorem is exactly as in the finite-dimensional
case. Infinite sums have to be understood in the Hilbert space sense.

Consider now $H=L^2[0,1]$ with inner product $\langle x, y \rangle
= \int_0^1 x(t) \overline {y(t)} dt$ and the standard orthonormal basis
$e_k(t) = \exp( i 2\pi k t)$, $k \in \Z$, and let $U=W=\C^n$,
for a given positive integer $n$.
A continuous linear map $A: L^2[0,1] \to \C^n$ is
necessarily (Riesz representation theorem) of the form
\[
Ax = \big(\langle x, a_1 \rangle, \ldots, \langle x, a_n \rangle\big)
=  \bigg( \int_0^1 \overline{a_1(t)} x(t) dt, \ldots ,
\int_0^1 \overline{a_n(t)} x(t) dt \bigg),
\quad x \in L^2[0,1],
\]
where $a_1, \ldots, a_n \in L^2[0,1]$.
A linear map $B: \C^n \to L^2[0,1]$ is of the form
\[
(B u) (t) = u_1 b_1(t) + \cdots + u_n b_n(t),
\quad u \in \C^n,
\]
where $b_1, \ldots, b_n \in L^2[0,1]$.
Hence the $jk$-entry of the matrix of $AB : \C^n \to \C^n$,
with respect to the standard basis on $\C^n$, is given by
\[
(AB)_{jk} = \int_0^1 \overline{a_j(t)} b_k(t) dt.
\]
Consider now $\sigma \in \SS_n(\Z)$, i.e., $\sigma=(\sigma_1, \ldots, \sigma_n)
\in \Z^n$ with $\sigma_1 < \cdots < \sigma_n$.
(There is no difficulty in replacing $\N$ in the above theorem by $\Z$.)
Then $H_\sigma$ is the subspace of $L^2[0,1]$ spanned by $e_{\sigma_1},
\ldots, e_{\sigma_n}$.
So the orthogonal projection $P_\sigma: H \to H_\sigma$ is given by
\[
P_\sigma x = \widehat x(\sigma_1) e_{\sigma_1} + \cdots +
\widehat x(\sigma_n) e_{\sigma_n},
\]
where
\[
\widehat x(k) := \int_0^1 x(t) \exp(-i2\pi k t) dt, \quad k \in \Z,
\]
are the Fourier coefficients of $x$.
Letting $\phi_\sigma : H_\sigma \to \C^n$ be the linear bijection
that takes $e_{\sigma_r}$ into the $r$-th standard basis vector
of $\C^n$, for $r=1, \ldots, n$,
we see that the $jk$-entry of the matrix of $\phi_\sigma P_\sigma B$
is
\[
(\phi_\sigma P_\sigma B)_{jk} = \widehat b_k(\sigma_j).
\]
Arguing analogously, the $jk$-entry of the matrix of $A E_\sigma
\phi_\sigma^{-1}$ is
\[
(A E_\sigma \phi_\sigma^{-1})_{jk} = \overline{\widehat a_j(\sigma_k)}.
\]
Hence the last formula of Theorem \ref{HT} gives
\begin{align*}
\det_{1 \le j,k \le n} \int_0^1 \overline{a_j(t)} b_k(t) dt
&= \sum_{\sigma \in \SS_n(\Z)}
\det_{1 \le j,k \le n} \big[\overline{\widehat a_j(\sigma_k)}\big]
\det_{1 \le j,k \le n} \big[\widehat b_j(\sigma_k)\big]
\\
&= 
\frac{1}{n!} \sum_{\sigma_1 \in \Z} \cdots \sum_{\sigma_n \in \Z}
\det_{1 \le j,k \le n} \big[\overline{\widehat a_j(\sigma_k)}\big]
\det_{1 \le j,k \le n} \big[\widehat b_j(\sigma_k)\big],
\end{align*}
where the second equality follows from the fact that applying
the permutation of
$(\sigma_1, \ldots, \sigma_n)$ to both matrices
will change the sign of both determinants simultaneously
and the fact that repeated indices result into zero determinants.
For $n=1$, this is the standard Parseval identity.

Of course, there is nothing special with the Lebesgue measure. We
can obtain formulas for any other $L^2$ space
or other separable Hilbert spaces.

\section{Remarks}
My motivation for this article was due to my 
desire to understand some elements of random matrix
theory \cite{AGZ2010}
and determinantal point processes \cite{HKPV}. In particular,
the derivation of the ubiquitous Tracy-Widom 
probability distribution \cite{AGZ2010}
involves several applications of Cauchy-Binet type formulas.
When I looked at it first, a standard computational proof was not
too satisfactory. I discovered that there are many proofs,
which can be roughly classified into combinatorial and 
algebraic ones. The version presented in this short article
was inspired by the simple observation that the Cauchy-Binet formula
is a version of Pythagorean theorem: it is a version
of the Pythagorean theorem on $\bigwedge^n \R^N$, with $n \le N$
(which is of course isomorphic to $\R^{\binom{N}{n}}$).

Several years ago, Zeilberger \cite{ZEIL} ``complained'' that,
to most contemporary mathematicians, matrices and linear transformations
are practically  interchangeable notions and that the mainstream
`Bourbakian' establishment, with its profound disdain for the concrete,
goes as far as to frown at the mere mention of the word `matrix'. 
He then explains how ``to [him], as well as to other `dissidents' called
`combinatorialists', a matrix has nothing whatsoever to do with that
intimidating abstract concept called `a linear transformation between linear
vector spaces' '' and, by 
thinking of matrices as putting weights on a graph,
he develops a combinatorial way of interpreting and proving fundamental results
such as the Cayley-Hamilton theorem.
The Cauchy-Binet formula has found a nice proof, in the Zeilberger sense,
as a corollary of the Gessel-Vienot lemma.
We also mention Zeng's proof \cite{ZENG} which also uses Zeilberger's methods.

In a sense then, what we have done here is in 
exactly the opposite of Zeilberger's spirit,
because the proof presented uses nothing else but the concept
of a linear map between vector spaces (and lots of
definitions). 
Each point of view has its own merits in that, for instance, it leads to
different kind of extensions.
(Extensions to infinite matrices are not easy when the combinatorial point
of view is adopted.)

We finally remark that there are
generalizations of the Cauchy-Binet formula for the case
where the matrices contain elements of a noncommutative ring
\cite{CSS}. We do not know how to extend the ideas above to this case.

\subsubsection*{Acknowledgments}
I thank Svante Janson \cite{SJ} for pointing out reference \cite{CSS} to me
and for his comments on this article, and Richard Ehrenborg \cite{EHR}
for kindly making his notes available to me.

\small

\vspace*{1cm}
\hfill
\noindent
\begin{minipage}[t]{6cm}
\small \sc
Takis Konstantopoulos
\\
Department of Mathematics
\\
Uppsala University
\\
751 06 Uppsala
\\
Sweden
\\
{\tt takis@math.uu.se
\\
www.math.uu.se/$\sim$takis}
\end{minipage}

\vspace*{1cm}

\hrulefill

\begin{center}
\begin{tabular}{lll}
\hspace*{3cm}
&
\begin{minipage}{0.5\textwidth}
\begin{gather*}
\bfig
\morphism[X`Y;F]
\morphism(500,0)[~`Z;G]
\morphism(450,0)|l|<0,-500>[~`~;P_i]
\morphism(550,0)|r|/<-/<0,-500>[~`~;E_i]
\morphism(500,0)//<0,-570>[~`Y_i;]
\efig
\end{gather*}
\end{minipage}
&
\begin{minipage}{4cm}
\begin{gather*}
Y = \displaystyle \bigoplus_{i=1}^K Y_i
\\[2mm] 
GF = \sum_{i=1}^K (GE_i)(P_iF)
\end{gather*}
\end{minipage}
\end{tabular}
\\[2mm]
Diagram 1
\end{center}

\begin{center}
\begin{tabular}{lll}
\begin{minipage}{0.3\textwidth}
\begin{gather*}
\bfig
\morphism/<-/[W`V;A]
\morphism(500,0)/<-/[~`U;B]
\morphism(450,0)|l|/<-/<0,-500>[~`~;E_\sigma]
\morphism(550,0)|r|<0,-500>[~`~;P_\sigma]
\morphism(500,0)//<0,-570>[~`V_\sigma;]
\efig
\end{gather*}
\end{minipage}
&
\begin{minipage}{0.4\textwidth}
\begin{gather*}
\bfig
\morphism(-600,0)<600,0>[\bigwedge^n W`\bigwedge^n V;A^*]
\morphism(80,0)<600,0>[~`\bigwedge^n U;B^*]
\morphism(-80,-30)|l|<0,-500>[~`~;\mathscr P_\sigma]
\morphism(50,-30)|r|/<-/<0,-500>[~`~;\mathscr E_\sigma]
\morphism(0,0)//<0,-600>[~`\bigwedge^n V_\sigma;]
\efig
\end{gather*}
\end{minipage}
&
\begin{minipage}{0.33\textwidth}
\begin{gather*}
\textstyle \bigwedge^n V =\displaystyle \bigoplus_{\sigma \in \SS_n(N)} \textstyle \bigwedge^n V_\sigma
\\[2mm]
B^* A^* 
 = \sum_{\sigma \in \SS_n(N)} (B^* \mathscr E_\sigma) (\mathscr P_\sigma A^*)
\end{gather*}
\end{minipage}
\end{tabular}
\\[2mm]
Diagram 2
\end{center}

\end{document}